\documentclass[12pt]{article}
\usepackage{pdfsync}
\usepackage{graphicx}
\usepackage{amsmath, amssymb, amsthm}
\usepackage{geometry}
\usepackage{epstopdf}
\usepackage{amsfonts}
\usepackage{mathrsfs}
\usepackage{marvosym}
\usepackage[pdftex]{color}
\usepackage{pdfcolmk}
\usepackage{mathbbol}

\usepackage[dvips]{epsfig} \DeclareGraphicsExtensions{.ps,.eps}
\def\R{\mathbb{R}}

\def\1{\mathbb{1}}

\def\diag{{\rm diag}}

\newcommand{\tp}{^{\top}}

\newcommand{\beq}{\begin{equation}}
\newcommand{\eeq}{\end{equation}}
\newcommand{\bea}{\begin{eqnarray}}
\newcommand{\eea}{\end{eqnarray}}

\newcommand{\bsea}{\begin{subeqnarray}}
\newcommand{\esea}{\end{subeqnarray}}
\newcommand{\nn}{\nonumber}

\def\bmat{\left[ \begin{array}}
\def\emat{\end{array} \right]}

\newcounter{acount}

\newtheorem{theorem}{Theorem}[section]
\newtheorem{proposition}{Proposition}[section]

\newtheorem{lemma}{Lemma}[section]

\title{On the Positivity of a  Class of Cauchy-Like Matrices\thanks{This work was partially supported by the Italian Ministry of University and Research under the PRIN project ``Extracting essential information and dynamics from complex networks'', grant  no. 2022MBC2EZ.}
}
  
\author{Augusto Ferrante\thanks{Dipartimento di Ingegneria dell'Informazione, Universit\`a di Padova, Via Gradenigo 6/B, 35131 Padova. e-mail: {\tt augusto.ferrante@unipd.it}}}

\begin{document}
\maketitle

\begin{abstract}
Let $0<\lambda_1<\cdots<\lambda_n$.
Motivated by a problem related to Lyapunov equations we consider a class of 
Cauchy-like matrices whose elements have the form 
$C_{ij}=\frac{r_i(k,l)+r_j(k,l)}{\lambda_i+\lambda_j},$ where
for any pair $1\le k,l\le n$, $r_i(k,l)$ are functions of $\{\lambda_1,\cdots,\lambda_n\}\setminus \lambda_i$.
We show that
these matrices
are positive semidefinite for every pair $1\le k,l\le n$.  
After passing to the
reciprocal variables $x_i=1/\lambda_i$, the problem is reduced by a diagonal
congruence to the positivity of a two-parameter family
$A_n^{(p,q)}(x)$.  The proof introduces a singular augmented matrix
$\mathcal H_n^{(p,q)}(x)$, proves its singularity by Cauchy-kernel generating
function identities, and then proves positive semidefiniteness by induction on
$n$ using the principal-minor criterion.  
\end{abstract}

{\bf Keywords:} Positive semidefinite matrices, Cauchy matrices, Kwong matrices,  
Loewner matrices.\\
{\bf Mathematics Subject Classification:} 15B05  

\section{Introduction}
In this paper we consider a class of Cauchy-like matrices defined as follows.
Let 
\beq\label{orderlam}
0<\lambda_1<\lambda_2< \dots <\lambda_n
\eeq
be {\em distinct}, {\em positive} real numbers.
Denote by $\sigma_{m}$ the \emph{elementary symmetric polynomial of degree $m$}
in the variables $\{\lambda_1, \dots, \lambda_n\}$
and by
$\sigma_{m}^{(j)}$ the  elementary symmetric polynomial of degree $m$
in the variables $\{\lambda_1, \dots, \lambda_n\} \setminus \{\lambda_j\}$.
Let $l,k\in\{1,2,\dots,n\}$ and define the following positive numbers:
\beq\label{rikl}
r_i(k,l):=\frac{\sigma_{n-k}^{(i)}}{\sigma_{n-l}^{(i)}}.
\eeq
Consider the class of Cauchy-like matrices  $C\in\R^{n\times n}$
whose elements are 
\beq\label{defcij}
C_{ij}=\frac{r_i(k,l)+r_j(k,l)}{\lambda_i+\lambda_j},\quad i,j=1,\dots,n.
\eeq

Matrices of this form are connected to a very well studied class of matrices:
the so-called Kwong (or anti-Loewner) matrices.
The latter are matrices $W$ whose elements have the form
$$
W_{ij}=\frac{f(\lambda_i)+f(\lambda_j)}{\lambda_i+\lambda_j}.
$$
The class of matrices under scrutiny here can be viewed as dual to Kwong matrices.
In fact, setting $\Lambda^{(i)}:= \{\lambda_j: \ j=1,\dots,n, \ j\neq i\}$,
we can write 
$C_{ij}=\frac{f(\Lambda^{(i)})+f(\Lambda^{(j)})}{\lambda_i+\lambda_j}$
where $f$ is a real-valued function. 

As a variation on the classical 
results of Loewner
\cite{Loewner1934}, a quite extensive literature has analyzed Kwong matrices and the properties of $f$ guaranteeing their positivity \cite{Kwong1989,BhatiaSano2009,Audenaert2011,Morishita2014,BhatiaJain2023}.
On the other hand, the matrices of the form discussed here have been so far neglected by the literature to the best of my knowledge. This paper may be viewed as a first attempt to start to fill this gap.

My main motivation comes from the analysis of the solution of a Lyapunov equation where the state matrix is in companion form, \cite{Ferrante-in-prep}.

The main result of this paper is the following.
\begin{theorem}
Let $\lambda_i$, $i=1,\dots,n$ be defined by (\ref{orderlam}) and 
$r_i(k,l)$ be defined by (\ref{rikl}). Then the matrix $C\in\R^{n\times n}$ whose elements are defined by (\ref{defcij}) is positive semidefinite for all $l,k\in\{1,2,\dots,n\}$.
\end{theorem}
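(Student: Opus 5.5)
We follow the route announced in the abstract: first remove the dependence on the full symmetric functions by passing to reciprocals, then reduce to a two-parameter family and prove it positive semidefinite by induction.

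\textbf{Change of variables.} Set $x_i=1/\lambda_i$ and write $P=\prod_{m}\lambda_m$. Since $\sigma^{(i)}_{n-k}$ is the elementary symmetric polynomial of degree $n-k$ in the $n-1$ variables other than $\lambda_i$, we have
$$\sigma^{(i)}_{n-k}=\frac{P}{\lambda_i}\,e_{k-1}\bigl(x^{(i)}\bigr),$$
where $e_m(x^{(i)})$ is the elementary symmetric polynomial in $\{x_j\}_{j\neq i}$. Hence $r_i(k,l)=e_{p}(x^{(i)})/e_{q}(x^{(i)})$ with $p=k-1$ and $q=l-1$. Also
$$\frac{1}{\lambda_i+\lambda_j}=\frac{x_ix_j}{x_i+x_j}.$$
A diagonal congruence by $D=\diag(x_i)$, together with multiplication of row $i$ and column $i$ by $e_q(x^{(i)})>0$, therefore turns $C$ into
$$A^{(p,q)}_n(x)_{ij}=\frac{e_p(x^{(i)})\,e_q(x^{(j)})+e_q(x^{(i)})\,e_p(x^{(j)})}{x_i+x_j}.$$
Congruence preserves semidefiniteness, so it suffices to show $A^{(p,q)}_n(x)\succeq 0$ for all distinct positive $x_i$ and all $0\le p,q\le n-1$.

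\textbf{Generating functions.} Encode $e_m(x^{(i)})$ by
$$\prod_{j\neq i}(1+tx_j)=\frac{\Pi(t)}{1+tx_i},\qquad \Pi(t)=\prod_{j}(1+tx_j).$$
The numerator of $A_{ij}$ is then a coefficient extraction from
$$\frac{\Pi(s)\,\Pi(t)}{(1+sx_i)(1+tx_j)},$$
symmetrized in $(s,t)$. Dividing by $x_i+x_j$ and using partial fractions of the Cauchy kernel, we aim to write $A$ as a combination of rank-one terms plus a Cauchy matrix $[1/(x_i+x_j)]$, which is positive definite. This suggests building an augmented $(n+1)\times(n+1)$ matrix $\mathcal H^{(p,q)}_n(x)$ by adjoining one extra row and column, for instance entries indexed by an extra node or by the vector $(e_p(x^{(i)}))_i$. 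The augmentation should be chosen so that a polynomial identity $\sum_i c_i\,\mathcal H_{i\cdot}=0$ holds, with explicit coefficients $c_i=\prod_{j\ne i}(x_i-x_j)^{-1}$-type weights coming from Lagrange interpolation or the residue theorem applied to the generating function. This gives singularity of $\mathcal H$.

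\textbf{Induction.} Use Sylvester's criterion in the principal-minor form: a symmetric matrix is positive semidefinite iff all its principal minors are nonnegative. By induction on $n$, principal submatrices of $A^{(p,q)}_n$ should be expressible, possibly after letting a variable tend to $0$ or $\infty$ or after a further congruence, as matrices of the form $A^{(p',q')}_{m}$ with $m<n$. The limit $x_j\to 0$ lowers the index set while keeping the $e$'s; the limit $x_j\to\infty$ after rescaling shifts $p,q$ down by one. So every proper principal minor is nonnegative by the inductive hypothesis, and only $\det A^{(p,q)}_n\ge 0$ remains. Singularity of $\mathcal H$ then expresses $\det A$ through a Schur complement, or via a bordered-determinant identity, in terms of lower-order determinants of known sign. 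We then conclude by a continuity and connectedness argument: on the connected set of ordered distinct positive $x$, $\det A$ cannot change sign without passing through zero, and it is nonnegative at a convenient point, e.g.\ $p=q$, where $A$ is a Cauchy-type Hadamard-weighted matrix $D\,[2/(x_i+x_j)]\,D$.

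\textbf{Main obstacle.} The hardest step is guessing the right augmentation $\mathcal H^{(p,q)}_n$ and proving its singularity through the generating-function identity. We would first try bordering $A$ with the column $(e_p(x^{(i)}))_i$ and the column $(e_q(x^{(i)}))_i$, and look for a kernel vector built from the weights $1/\Pi'(-1/x_i)$. Checking by hand the cases $n=2,3$ should reveal the correct border.
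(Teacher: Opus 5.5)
Your reduction to $A_n^{(p,q)}(x)\succeq0$ is correct and is exactly the paper's. The rest of the plan, however, is either left unspecified or would not work as stated. First, the augmentation is never pinned down, and your suggested border $u_p=(e_p(x^{(i)}))_i$ is the wrong one. The paper's singularity proof rests on the identity $KU_rw=\beta_ru_r$, with $\beta_r=\tfrac12(-1)^{n+r+1}$, for the single vector $w=\tfrac12XK^{-1}X^{-1}\1$, i.e.\ $w_j=x_j\prod_{k\neq j}\frac{x_j+x_k}{x_k-x_j}$. This identity gives $A_n^{(p,q)}w=(\beta_p+\beta_q)\,u_p\circ u_q$. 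So the border has to be the Hadamard product $v_{p,q}=u_p\circ u_q$. The corner has to be the specific scalar $s_{p,q}=\sum_{a=0}^{\mu}(-1)^{\mu-a}e_ae_{\mu+\nu+1-a}$, because that choice gives $v_{p,q}^\top w=\gamma_{p,q}s_{p,q}$ and hence makes $[-\gamma_{p,q};\,w]$ a kernel vector. None of this is in your proposal; ``check $n=2,3$ by hand'' defers the central construction.

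The more serious gap is that you run the induction on $A$ alone. That does handle the proper principal minors of $A$, though not via limits $x_j\to0$ or $\infty$. At fixed $x$ one simply has $A_n^{(p,q)}(x)[I]=\sum_{a,b}e_a(x_J)e_b(x_J)A^{(p-a,q-b)}_{|I|}(x_I)$, with $J$ the complement of $I$. But this says nothing about $\det A_n^{(p,q)}$, and singularity of the bordered matrix cannot supply the sign by itself: a Schur complement only yields $s_{p,q}=v_{p,q}^\top A^{-1}v_{p,q}$ when $A$ is invertible. What works is the adjugate. If $\mathcal H$ has rank $n$, then $\operatorname{adj}\mathcal H=c\,kk^\top$ with $k=[-\gamma_{p,q};\,w]$, so $\det A=c\gamma_{p,q}^2$. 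The sign of $c$ can only be read off a cofactor $\det\mathcal H[\{0\}\cup I]$ with $|I|=n-1$, which is a \emph{bordered} minor. Hence the inductive statement must be $\mathcal H\succeq0$. You then need that $\mathcal H[\{0\}\cup I]$ equals $\sum_{a,b}e_a(x_J)e_b(x_J)\mathcal H^{(p-a,q-b)}_{|I|}(x_I)$ plus a corner correction $R_{p,q}$, and that $R_{p,q}\ge0$. This is the heart of the paper's argument. It uses the positive expansion of $s_{p,q}$ over monomials $x_L^2x_T$ with coefficients $\binom{p+q-2d}{\mu-d}$, together with a Vandermonde cancellation showing that $R_{p,q}$ keeps exactly the monomials with $T\subseteq J$. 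Your continuity fallback does not replace this step. $\det A$ may vanish (identically when $p+q$ is odd), so its sign cannot be propagated across the connected set of admissible $x$. And $p=q$ is a different matrix, not a point you can reach by moving $x$.
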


\section{Preliminaries}
\subsection{Cauchy matrices}
We recall, see e.g, \cite{Bhatia2006}, that the Cauchy matrix 
${\mathcal C}$ whose elements are 
$$
{\mathcal C}_{ij}=\frac{1}{\lambda_i+\lambda_j},\quad i,j=1,\dots,n.
$$ is positive definite whenever the nodes $\lambda_i$ are all positive 
and pairwise distinct which is guaranteed by our assumptions.

\subsection{A result on rational functions}

\begin{lemma}\label{lemmaratfunc}
Let $\lambda_1,\dots,\lambda_n$ be distinct positive real numbers and \(g(s)\) be a 
strictly proper rational function having only simple poles at $-\lambda_i$, $i=1,\dots,n$ and no other poles.
Then \(g\) is uniquely determined by the \(n\) values \(g(\lambda_i),\dots,g(\lambda_n)\).
\end{lemma}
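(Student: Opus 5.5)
We read the statement as saying that $g$ is determined by its values $g(\lambda_1),\dots,g(\lambda_n)$ at the \emph{positive} points $\lambda_i$, which are distinct from the poles $-\lambda_i$. The approach is the partial fraction decomposition. Since $g$ is strictly proper and its only poles are simple poles at $-\lambda_1,\dots,-\lambda_n$, we can write
\[
g(s)=\sum_{j=1}^n \frac{c_j}{s+\lambda_j}
\]
for some constants $c_j$ (the residues). To justify this, note that $g(s)\prod_{j}(s+\lambda_j)$ is a polynomial of degree at most $n-1$, because $g$ is strictly proper. Partial fractions over distinct linear factors then give exactly this form, with no polynomial part. So $g$ is completely described by the vector $c=(c_1,\dots,c_n)\tp$.

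Next we evaluate at the points $\lambda_i$, which are not poles since $\lambda_i+\lambda_j>0$. This gives
\[
g(\lambda_i)=\sum_{j=1}^n \frac{c_j}{\lambda_i+\lambda_j}=({\mathcal C}c)_i,\qquad i=1,\dots,n,
\]
where ${\mathcal C}$ is the Cauchy matrix from the previous subsection. So the vector of values equals ${\mathcal C}c$.

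For uniqueness, suppose $g_1,g_2$ both satisfy the hypotheses and agree at every $\lambda_i$. Their difference $g_1-g_2$ has the same partial-fraction form with coefficient vector $d$, and ${\mathcal C}d=0$. We recalled that ${\mathcal C}$ is positive definite for distinct positive nodes, so it is invertible, hence $d=0$ and $g_1=g_2$. One can also recover $c$ explicitly as $c={\mathcal C}^{-1}\,(g(\lambda_1),\dots,g(\lambda_n))\tp$.

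There is no real obstacle here. The only point needing care is that strict properness removes any polynomial part, so exactly $n$ unknowns are matched by $n$ equations. A purely algebraic alternative to positive definiteness is the Cauchy determinant formula, which is nonzero because the $\lambda_i$ are distinct and all $\lambda_i+\lambda_j>0$.
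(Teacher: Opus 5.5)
Your proof is correct, but it takes a different route from the paper. The paper never uses partial fractions or the Cauchy matrix. It writes $g_1=Q_1/P$ and $g_2=Q_2/P$, with $P(s)=\prod_k(s+\lambda_k)$ and $\deg Q_1,\deg Q_2\le n-1$. Since $P(\lambda_i)\neq 0$, agreement at the $\lambda_i$ forces $Q_1(\lambda_i)=Q_2(\lambda_i)$. It then concludes because a polynomial of degree at most $n-1$ with $n$ distinct zeros vanishes identically.

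You instead parametrize $g$ by its residue vector $c$. This reduces uniqueness to the nonsingularity of $\mathcal C$, which you import from the positive definiteness recalled in the preliminaries, or from the Cauchy determinant formula.

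The paper's argument is more elementary and self-contained. It is only a degree count, and it needs only that the evaluation points be $n$ distinct non-poles. Read in reverse, it proves that $\mathcal C$ is nonsingular rather than assuming it: if $\mathcal C d=0$, then $\sum_j d_j/(s+\lambda_j)$ vanishes at every $\lambda_i$, hence is identically zero.

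Your version gives two things in exchange. First, it gives existence as well as uniqueness, with an explicit reconstruction $c=\mathcal C^{-1}\bigl(g(\lambda_1),\dots,g(\lambda_n)\bigr)^{\top}$. Second, it shows plainly that the lemma is equivalent to invertibility of the Cauchy matrix. This matches how the lemma is used later. There, expanding $\frac{1-\rho(s)}{2s}=\sum_j c_j/(s+x_j)$ and evaluating at the $x_i$ gives $Kc=\frac12 h$. So the identity $R(s)=\frac{1-\rho(s)}{2s}$ just says that $z$ is the unique solution of $Kz=\frac12 h$.
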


\begin{proof}
Let
\[
g_1(s)=\frac{Q_1(s)}{P(s)},\qquad g_2(s)=\frac{Q_2(s)}{P(s)}
\]
where \(P(s)=\prod_{k=1}^n(s+\lambda_k)\) and \(Q_1(s)\) and $Q_2(s)$ are  polynomials of degree at most \(n-1\). Assume that $g_1(\lambda_i)=g_2(\lambda_i)$, $i=1,\dots,n$.  Then
\[
Q_1(\lambda_i)=g_1(\lambda_i)P(\lambda_i)=g_2(\lambda_i)P(\lambda_i)=Q_2(\lambda_i),
\qquad i=1,\dots,n.
\]
Thus the polynomial \(Q_1(s)-Q_2(s)\), whose degree is at most \(n-1\), vanishes at the \(n\) distinct points \(\lambda_1,\dots,\lambda_n\). Hence \(Q_1(s)-Q_2(s)\equiv 0\) and therefore \(g_1(s)=g_2(s)\).
\end{proof}

\subsection{Problem reformulation}

Set
\beq\label{defxiX}
x_i:=\frac1{\lambda_i},
\qquad
X:=\diag(x_1,\dots,x_n).
\eeq
Thus the \(x_i\)'s are positive and pairwise distinct.

For a vector \(x=(x_1,\dots,x_m)\), let \(e_r(x)\) denote the elementary
symmetric polynomial of degree \(r\) in \(x_1,\dots,x_m\). We use the convention
\[
e_0(x)=1,\qquad e_r(x)=0 \quad \text{if } r<0 \text{ or } r>m.
\]
Also, \(e_r^{(i)}(x)\) denotes the elementary symmetric polynomial of degree
\(r\) in all variables except \(x_i\), with the analogous convention.
Notice that
$\frac{\sigma_n}{\lambda_i}=\prod_{j\neq i}\lambda_j$ so that
we have $\sigma_{n-m}^{(i)}
=
\frac{\sigma_n}{\lambda_i}e_{m-1}^{(i)}(x)$, for all $m=0,1,\dots, n$.
In particular, for $m=k$ and $m=l$, we obtain
\[
\sigma_{n-k}^{(i)}
=
\frac{\sigma_n}{\lambda_i}e_{k-1}^{(i)}(x),
\qquad
\sigma_{n-l}^{(i)}
=
\frac{\sigma_n}{\lambda_i}e_{l-1}^{(i)}(x).
\]
Hence
\[
r_i(k,l)
=
\frac{\sigma_{n-k}^{(i)}(\lambda)}
{\sigma_{n-l}^{(i)}(\lambda)}
=
\frac{e_{k-1}^{(i)}(x)}{e_{l-1}^{(i)}(x)}.
\]
Moreover, we have
\[
\lambda_i+\lambda_j
=
\frac1{x_i}+\frac1{x_j}
=
\frac{x_i+x_j}{x_ix_j}.
\]

Thus we can rewrite the elements of $C$ as
\[
C_{ij}
=
x_ix_j
\frac{\frac{e_{k-1}^{(i)}(x)}{e_{l-1}^{(i)}(x)}+\frac{e_{k-1}^{(j)}(x)}{e_{l-1}^{(j)}(x)}}{x_i+x_j}=
\frac{x_ix_j}{e_{l-1}^{(i)}(x)e_{l-1}^{(j)}(x)}
\frac{e_{k-1}^{(i)}(x) e_{l-1}^{(j)}(x)+e_{k-1}^{(j)}(x) e_{l-1}^{(i)}(x)}{x_i+x_j}.
\]

By setting $D_{l-1}:=
\diag\bigl(e_{l-1}^{(1)}(x),\dots,e_{l-1}^{(n)}(x)\bigr)$,
we get
\[
C
=
X D_{l-1}^{-1}
A_n^{(k-1,l-1)}(x)
D_{l-1}^{-1}X
\]
where, for \(1\le  k,l \le n\), $A_n^{(k-1,l-1)}(x)$ is the 
$n\times n$ matrix whose elements are
\[
[A_n^{(k-1,l-1)}(x)]_{ij}
=
\frac{
e_{k-1}^{(i)}(x)e_{l-1}^{(j)}(x)
+
e_{k-1}^{(j)}(x)e_{l-1}^{(i)}(x)
}
{x_i+x_j}.
\]
Since $X$ and $D_{l-1}$ are nonsingular diagonal (and hence symmetric) matrices, by setting
$p:=k-1$ and $q:=l-1$, our problem is equivalent to
\beq
A_n^{(p,q)}(x)\succeq0
\qquad
\text{for all }0\le p,q\le n-1.
\eeq
Notice that
$
A_n^{(p,q)}(x)=A_n^{(q,p)}(x).$

\section{Proof of the main result}

\subsection{Partial analysis of the case $l=n$}

For \(l=n\), the elements of the matrix $A_n^{(k-1,l-1)}(x)$ reduce to the form 
$$
\begin{aligned}
[A_n^{(k-1,n-1)}(x)]_{ij}
&=
\frac{
e_{k-1}^{(i)}(x)e_{n-1}^{(j)}(x)
+
e_{k-1}^{(j)}(x)e_{n-1}^{(i)}(x)
}
{x_i+x_j}\\
&=
\frac{
e_{k-1}^{(i)}(x)e_{n}(x)/x_j
+
e_{k-1}^{(j)}(x)e_{n}(x)/x_i
}
{x_i+x_j}\\
&=
e_{n}(x) 
\frac{1}{x_ix_j}
\frac{
e_{k-1}^{(i)}(x) x_i
+
e_{k-1}^{(j)}(x) x_j
}
{x_i+x_j}
\end{aligned}
$$

As a consequence, 
$$
A_n^{(k-1,n-1)}(x)=e_{n}(x) X^{-1} B_n^{(k-1)}(x)X^{-1}
$$
with $B_n^{(k-1)}(x)$ being the $n\times n$ matrix having elements
\[
\bigl[B_n^{(k-1)}(x)\bigr]_{ij}
:=\frac{x_i e_{k-1}^{(i)}(x)+x_j e_{k-1}^{(j)}(x)}{x_i+x_j}.
\]

Since \(e_{n}(x) >0\) and \(X=X\tp\) is invertible, 
in the particular case of $l=n$, our result is equivalent to 
\[
B_n^{(r)}(x)\succeq 0
\qquad
\text{for }0\le r\le n-1.
\]

The matrix $B_n^{(r)}(x)$ indeed plays an important role also for the general case.
Thus, it is interesting to derive some results on such a matrix. Define
 $K$ to be the Cauchy matrix associated with the nodes $x_i$:
\beq\label{defofK}
K:=\left(\frac1{x_i+x_j}\right)_{i,j=1}^n.
\eeq
We also set
\beq\label{defureUr}
u_r:=
\begin{bmatrix}
e_r^{(1)}(x)\\
\vdots\\
e_r^{(n)}(x)
\end{bmatrix},
\qquad
U_r:=\diag(u_r),\qquad 0\le r\le n-1,
\eeq
Then,
we can rewrite $B_n^{(r)}(x)$ as
\beq\label{altraespdib}
B_n^{(r)}(x)=XU_rK+KU_rX.
\eeq

We will use the following result.
\begin{proposition}\label{prop:cauchy-kernel-identities}
Let \(x_1,\dots,x_n>0\) be pairwise distinct, and define $X$, $K$ 
$u_r$ and $U_r$ be defined by (\ref{defxiX}), (\ref{defofK})
and (\ref{defureUr}).
Let 
$$h:=
\begin{bmatrix}
1/x_1\\
\vdots\\
1/x_n
\end{bmatrix}=X^{-1}\1,\qquad
z:=\frac12K^{-1}h, \qquad w:=Xz.
$$
Finally, let 
\[
\alpha_r:=\frac{1-(-1)^{n+r}}2,\qquad \beta_r:=\alpha_r-\frac12
=
\frac{(-1)^{n+r+1}}2.
\]

Then, for every \(0\le r\le n-1\), we have:
\beq\label{primofatto}
 u_r^\top z=\alpha_r e_r(x),
\eeq
\beq\label{secondofatto}
B_n^{(r)}(x)z=\alpha_r u_r,
\eeq
and
\beq\label{terzofatto}
KU_rw=\beta_r u_r.
\eeq
\end{proposition}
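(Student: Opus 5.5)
The plan is to make $z$ explicit through a rational generating function, and then to read off all three identities as coefficients of suitable polynomials in an auxiliary variable $t$. By definition, $z$ is characterized by $Kz=\tfrac12 h$, i.e.
\[
\sum_{j=1}^n \frac{z_j}{x_i+x_j}=\frac{1}{2x_i},\qquad i=1,\dots,n .
\]
We therefore introduce $g(s):=\sum_j z_j/(s+x_j)$. This is a strictly proper rational function with at most simple poles at $-x_j$, and it satisfies $g(x_i)=1/(2x_i)$. The candidate is
\[
\tilde g(s):=\frac{1}{2s}\Bigl(1-(-1)^n\prod_{m=1}^n\frac{s-x_m}{s+x_m}\Bigr).
\]
We check its properties one by one. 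At $s=x_i$ the product vanishes, so $\tilde g(x_i)=1/(2x_i)$. At $s=0$ the product equals $(-1)^n$, so the bracket vanishes and the apparent pole at $0$ is removable. As $s\to\infty$ the bracket stays bounded, so $\tilde g$ is strictly proper. Its only poles are simple ones at the $-x_m$. Lemma \ref{lemmaratfunc}, applied with the nodes $x_i$ in place of $\lambda_i$, then gives $g=\tilde g$. (The $z_j$ are the residues, but we will not need them explicitly.)

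For (\ref{primofatto}) we use $\sum_{r} e_r^{(j)}(x)t^r=\prod_{m\ne j}(1+tx_m)$. Writing $P(t):=\prod_m(1+tx_m)$, this gives
\[
\sum_r (u_r^\top z)\,t^r=P(t)\sum_j\frac{z_j}{1+tx_j}=P(t)\,\tfrac1t\, g(1/t).
\]
Substituting $\tilde g$ yields
\[
\tfrac12\Bigl(\prod_m(1+tx_m)-(-1)^n\prod_m(1-tx_m)\Bigr).
\]
The coefficient of $t^r$ in this polynomial is $\tfrac12\bigl(1-(-1)^{n+r}\bigr)e_r(x)=\alpha_r e_r(x)$.

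For (\ref{terzofatto}), the $i$-th entry of $KU_rw$ is $\sum_j e_r^{(j)}x_jz_j/(x_i+x_j)$, and its generating function is $P(t)F_i(t)$ with $F_i(t):=\sum_j \frac{x_jz_j}{(x_i+x_j)(1+tx_j)}$. The key step is the partial fraction identity in the index $j$:
\[
\frac{x_j}{(x_i+x_j)(1+tx_j)}=\frac{1}{1-tx_i}\Bigl(\frac{1}{1+tx_j}-\frac{x_i}{x_i+x_j}\Bigr).
\]
It gives $F_i(t)=\bigl(\tfrac1t g(1/t)-x_ig(x_i)\bigr)/(1-tx_i)$. Since $x_ig(x_i)=\tfrac12$, the constant terms cancel, and
\[
P(t)F_i(t)=-\tfrac{(-1)^n}{2}\prod_{m\ne i}(1-tx_m).
\]
The coefficient of $t^r$ is $-\tfrac{(-1)^{n+r}}{2}e_r^{(i)}(x)=\beta_r e_r^{(i)}(x)$, which is (\ref{terzofatto}).

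Finally, (\ref{secondofatto}) follows from (\ref{altraespdib}). The first term is $XU_rKz=\tfrac12 XU_rX^{-1}\1=\tfrac12 u_r$, because diagonal matrices commute. The second term is $KU_rXz=KU_rw=\beta_r u_r$ by (\ref{terzofatto}). The sum is $(\tfrac12+\beta_r)u_r=\alpha_r u_r$.

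I expect the main obstacle to be guessing the closed form $\tilde g$ and verifying the removable singularity at $s=0$. Once $g$ is identified, everything reduces to coefficient extraction, and the only delicate point is the partial fraction split used for (\ref{terzofatto}).
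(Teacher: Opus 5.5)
Your proof is correct and follows essentially the paper's route: the same closed form $g(s)=\frac{1-\rho(s)}{2s}$ obtained from Lemma~\ref{lemmaratfunc}, the same generating functions in $t=1/s$, and the same partial-fraction split $\frac{x_j}{(x_i+x_j)(1+tx_j)}=\frac{1}{1-tx_i}\bigl(\frac{1}{1+tx_j}-\frac{x_i}{x_i+x_j}\bigr)$. The only difference is the order of the last two identities. You compute $KU_rw$ directly and deduce (\ref{secondofatto}), whereas the paper computes $B_n^{(r)}(x)z$ directly and deduces (\ref{terzofatto}); since the paper's second summand there is exactly your $P(t)F_i(t)$, this is a harmless reordering that slightly shortens the computation.
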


\begin{proof}
First, \(K\) is a Cauchy matrix associated with positive distinct nodes and hence it is positive  definite. Thus \(K\) is invertible, and \(z\) is
well-defined.

By definition of \(z\),
\[
Kz=\frac12h,
\]
and, denoting by $z_j$ the $j-$th component of $z$, this is equivalent to
\beq\label{rdizj}
\sum_{j=1}^n \frac{z_j}{x_i+x_j}
=
\frac1{2x_i},
\qquad i=1,\dots,n.
\eeq

Define
\[
P(s):=\prod_{i=1}^n(s+x_i),
\qquad
\rho(s):=\frac{P(-s)}{P(s)},\qquad R(s):=\sum_{j=1}^n\frac{z_j}{s+x_j}.
\]

By taking (\ref{rdizj}) into account, we have
\[
R(x_i)=\frac1{2x_i},\qquad \rho(x_i)=0,
\qquad i=1,\dots,n.
\]

We claim that
\beq\label{r=umrs2s}
R(s)=\frac{1-\rho(s)}{2s}.
\eeq
Indeed, both sides of this equality are strictly proper rational functions 
having only simple poles at
\(-x_1,\dots,-x_n\). In fact, \(\rho(0)=1\) so that the singularity 
at \(s=0\) of $\frac{1-\rho(s)}{2s}$ is removable.

Moreover, since \(P(-x_i)=0\) for every \(i=1,\dots,n\), we have
\[
\frac{1-\rho(x_i)}{2x_i}
=
\frac1{2x_i},\qquad i=1,\dots,n.
\]
Hence (\ref{r=umrs2s}) follows from Lemma \ref{lemmaratfunc}.

Now define
\[
\tau:=\frac1{s}.
\]
and let
\beq\label{defdiE}
E(\tau):=\tau^n P(1/\tau)=\prod_{i=1}^n(1+x_i\tau)
=
\sum_{r=0}^n e_r(x)\tau^r,
\eeq
and, for \(i=1,\dots,n\),
\beq\label{defdiUi}
U_i(\tau):=\frac{E(\tau)}{1+x_i\tau}
=
\sum_{r=0}^{n-1} e_r^{(i)}(x)\tau^r.
\eeq

To prove our result, we derive some relations linking $E(\tau)$ with $R(s)$ and $\rho(s)$ (remember that $s=1/\tau$).

To prove (\ref{primofatto}), we have 
\begin{align}\nn
\sum_{r=0}^{n-1} u_r^\top z\,\tau^r
&=\sum_{r=0}^{n-1} [e_r^{(1)}(x)\ \dots\ e_r^{(n)}(x)] \begin{bmatrix}z_1\\
\vdots\\
z_n\end{bmatrix}\tau^r=\sum_{r=0}^{n-1}
\sum_{i=1}^n e_r^{(i)}(x) z_i\tau^r=\sum_{i=1}^n z_i
\underbrace{\sum_{r=0}^{n-1}
 e_r^{(i)}(x) \tau^r}_{U_i(\tau)}\\
 \label{exprofsum}
&=E(\tau)\sum_{i=1}^n\frac{z_i}{1+x_i\tau}.
\end{align}
Now observe that
\beq\label{eqforsumoffra}
\begin{aligned}
\sum_{i=1}^n\frac{z_i}{1+x_i\tau}&=
\sum_{i=1}^n\frac{sz_i}{s+x_i}
=
sR(s)=
s\frac{1-\rho(s)}{2s}=
(1/2) \left[1-\frac{P(-1/\tau)}{P(1/\tau)}\right]\\
&= (1/2) \left[\frac{\tau^n P(1/\tau) - \tau^n P(-1/\tau)}{\tau^n P(1/\tau)}\right]=\frac12\frac{E(\tau)-(-1)^nE(-\tau)}{E(\tau)}.
\end{aligned}
\eeq
By plugging (\ref{eqforsumoffra}) into (\ref{exprofsum}) and taking
the expansion (\ref{defdiE}) into account, we get
\[
\sum_{r=0}^{n-1} u_r^\top z\,\tau^r
=\frac12\left[E(\tau)-(-1)^nE(-\tau)\right]=
\frac12 \sum_{r=0}^n e_r(x)\tau^r\underbrace{(1-(-1)^{n+r})}_{2\alpha_r}
\]
By comparing the coefficients of \(\tau^r\), we now get (\ref{primofatto}).

We now prove (\ref{secondofatto}).
Fix \(i\in\{1,\dots,n\}\). Using the generating function \(U_i(\tau)\), we have
\[
\begin{aligned}
\sum_{r=0}^{n-1}
\bigl[B_n^{(r)}(x)z\bigr]_i\tau^r
&=\sum_{r=0}^{n-1}
\sum_{j=1}^n [B_n^{(r)}(x)]_{ij} z_j\tau^r=
\sum_{j=1}^n z_j
\sum_{r=0}^{n-1}[B_n^{(r)}(x)]_{ij} \tau^r\\
&=
\sum_{j=1}^n z_j
\sum_{r=0}^{n-1}\frac{x_i e_r^{(i)}+x_j e_r^{(j)}}{x_i+x_j} \tau^r=
\sum_{j=1}^n z_j \frac{x_iU_i(\tau)+x_jU_j(\tau)}{x_i+x_j}\\
&=
\sum_{j=1}^n z_j \frac{x_iU_i(\tau)}{x_i+x_j} +z_jx_j\frac{E(\tau)}{(x_i+x_j)(1+x_j\tau)}\\
&=
x_iU_i(\tau)\sum_{j=1}^n\frac{z_j}{x_i+x_j}
+
E(\tau)
\sum_{j=1}^n
z_j\frac{x_j}{(1+x_j\tau)(x_i+x_j)}\\
&=
x_iU_i(\tau)\frac1{2x_i}
+
E(\tau)
\sum_{j=1}^n
z_j\frac{sx_j}{(s+x_j)(x_i+x_j)}\\
&=
\frac12U_i(\tau)
+
E(\tau)
\sum_{j=1}^n
z_j\left[\frac{s^2}{s-x_i}\frac1{s+x_j}
-
\frac{s x_i}{s-x_i}\frac1{x_i+x_j}\right]\\
&=
\frac12U_i(\tau)
+
E(\tau)
\left[
\frac{s^2}{s-x_i}\sum_{j=1}^n\frac{z_j}{s+x_j}
-
\frac{s x_i}{s-x_i}\sum_{j=1}^n\frac{z_j}{x_i+x_j}
\right]\\
&=
\frac12U_i(\tau)
+
E(\tau)
\left[
\frac{s^2R(s)}{s-x_i}
-
\frac{s x_i}{s-x_i}\frac1{2x_i}
\right]\\
&=
\frac12U_i(\tau)
+
E(\tau)
\frac{s}{s-x_i}
\left(sR(s)-\frac12\right)\\
&=
\frac12U_i(\tau)
+
E(\tau)
\frac{s}{s-x_i}
\left(\frac{1-\rho(s)}2-\frac12\right)\\
&=
\frac12U_i(\tau)
+
E(\tau)
\frac{s}{s-x_i}
\left(-\frac{\rho(s)}2\right)\\
&=
\frac12\left[ U_i(\tau)
-
\frac{s}{s-x_i}E(\tau)\rho(s)\right]\\
&=
\frac12\left[U_i(\tau)
-
\frac{1}{1-x_i\tau}E(\tau)
\frac{P(-1/\tau)\tau^n}{P(1/\tau)\tau^n}\right]\\
&=
\frac12\left[U_i(\tau)
-
\frac{1}{1-x_i\tau}E(\tau)
\frac{E(-\tau)(-1)^n}{E(\tau)}\right]\\
&=
\frac12\left[U_i(\tau)
-
(-1)^n \frac{E(-\tau)}{1-x_i\tau}\right]\\
&=
\frac12\left[U_i(\tau)
-
(-1)^n U_i(-\tau)\right]\\
&=
\frac12\left[\sum_{r=0}^{n-1}e_r^{(i)}(x)\tau^r
-
\sum_{r=0}^{n-1} (-1)^{n} e_r^{(i)}(x)(-\tau)^r\right]\\
&=
\sum_{r=0}^{n-1}\frac{1-(-1)^{n+r}}{2} e_r^{(i)}(x)\tau^r.
\end{aligned}
\]
Comparing the coefficients of $\tau^r$ gives
\[
\bigl[B_n^{(r)}(x)z\bigr]_i
=
\frac{1-(-1)^{n+r}}2 e_r^{(i)}(x)
=
\alpha_r e_r^{(i)}(x),\qquad i=1,\dots,n
\]
which is equivalent to (\ref{secondofatto}).

Finally, taking into account (\ref{altraespdib}), the definitions of $h$ of $z$ and of $w$ and (\ref{secondofatto}), we get
\[
\alpha_r u_r=B_n^{(r)}(x)z
=
XU_rKz+KU_rXz=
\frac12 XU_rh+KU_rw
=\frac12 U_r\1 +KU_rw=
\frac12u_r+KU_rw,
\]
so that
\[
KU_rw
=
\left(\alpha_r-\frac12\right)u_r
=
\beta_r u_r.
\]
\end{proof}

\subsection{A scalar positivity result}

For \(0\le p,q\le n-1\), set
\[
\mu:=\min\{p,q\},
\qquad
\nu:=\max\{p,q\},
\]
and define
\beq\label{defofs}
s_{p,q}(x)
:=
\sum_{a=0}^{\mu}
(-1)^{\mu-a}e_a(x)e_{\mu+\nu+1-a}(x).
\eeq
Thus \(s_{p,q}=s_{q,p}\). For example,
$
s_{0,q}=e_{q+1},
$
and
$
s_{1,q}=e_1e_{q+1}-e_{q+2}.
$

We shall need the following elementary positivity fact.

\begin{lemma}
For \(x_1,\dots,x_n>0\),
\[
s_{p,q}(x)\ge0
\qquad
\text{ for all }0\le p,q\le n-1.
\]
\end{lemma}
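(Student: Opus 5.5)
The plan is to group the alternating sum (\ref{defofs}) into consecutive pairs, each of which is a nonnegative $2\times2$ determinant in the elementary symmetric polynomials. Set $N:=\mu+\nu+1$ and $T_a:=e_a(x)e_{N-a}(x)$, so that $s_{p,q}=\sum_{a=0}^{\mu}(-1)^{\mu-a}T_a$. Starting from the top index $a=\mu$, which carries the sign $+$, I would pair the terms as $(T_\mu-T_{\mu-1})+(T_{\mu-2}-T_{\mu-3})+\cdots$. If $\mu$ is odd the pairing is exhaustive. If $\mu$ is even, one unpaired term $T_0=e_0e_N=e_N(x)\ge0$ remains with sign $+$. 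It therefore suffices to show that for every $a$ with $1\le a\le\mu$,
\[
D_a:=e_a(x)e_{N-a}(x)-e_{a-1}(x)e_{N-a+1}(x)\ge0 .
\]
Since $a\le\mu\le\nu$, we have $b:=N-a=\mu+\nu+1-a\ge\nu+1>a$.

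To prove $D_a\ge0$ for $b> a$ and $x_i>0$, I would use the dual Jacobi--Trudi identity. It gives
\[
D_a=\det\begin{bmatrix} e_b & e_{b+1}\\ e_{a-1} & e_a\end{bmatrix}=s_{\lambda}(x),
\]
the Schur polynomial indexed by the partition $\lambda$ whose conjugate is $(b,a)$; this is a genuine partition because $b\ge a$. A Schur polynomial is a sum of monomials with nonnegative coefficients (a sum over semistandard tableaux), so $s_\lambda(x)\ge0$ for positive $x$. If the number of variables is too small for the shape, it simply vanishes, which is consistent with the convention $e_r=0$ for $r>n$.

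Should I prefer to avoid citing symmetric-function theory, I would instead prove $e_ae_b\ge e_{a-1}e_{b+1}$ for $b\ge a$ directly by an injection argument. Expand both products as sums over pairs of subsets $(S,T)$ with $|S|=a-1,|T|=b+1$, respectively $|S|=a,|T|=b$, weighted by $x^Sx^T$. Then map each pair $(S,T)$ with $|S|=a-1$, $|T|=b+1$ injectively and weight-preservingly to a pair with sizes $(a,b)$ by moving one suitably chosen element of $T\setminus S$ into $S$. Such an element exists since $|T\setminus S|\ge b+1-(a-1)>0$, and it is chosen via the standard lattice-path/parenthesis-matching rule on the symmetric difference so that the map is invertible. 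A quicker alternative route is induction on $n$ using $e_r(x)=e_r(x')+x_ne_{r-1}(x')$, but the cross terms make that messier.

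The main obstacle is exactly this inequality $D_a\ge0$. The pairing bookkeeping is routine: check that the signs alternate correctly starting with $+$ at $a=\mu$, and that the index condition $b>a$ holds for every pair. I would go with the Jacobi--Trudi/Schur-positivity argument as the primary proof and keep the explicit injection as a self-contained fallback. Summing the nonnegative $D_a$'s and the possible leftover $e_N\ge0$ then gives $s_{p,q}(x)\ge0$.
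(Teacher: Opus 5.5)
Your proof is correct, but it takes a different route from the paper. You group the alternating sum into consecutive differences $e_ae_b-e_{a-1}e_{b+1}$ with $b=\mu+\nu+1-a\ge\nu+1>a$, plus the leftover $e_{\mu+\nu+1}\ge0$ when $\mu$ is even. You then recognize each difference, via the dual Jacobi--Trudi identity, as the Schur polynomial of the two-column shape $(2^a,1^{b-a})$, which is monomial-positive. The sign bookkeeping and the specialization to $n$ variables both check out.

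The paper never groups terms. It expands each product $e_a e_{p+q+1-a}$ as a sum over pairs of subsets, collects the terms giving a fixed monomial $x_L^2x_T$, and evaluates the resulting alternating binomial sum in closed form as $\binom{p+q-2d}{\mu-d}$.

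Your route is shorter and proves more, namely Schur positivity: $s_{p,q}=\sum_a s_{(2^a,1^{\mu+\nu+1-2a})}$, with the sum over $0\le a\le\mu$, $a\equiv\mu\pmod 2$. The cost is that it imports Jacobi--Trudi and the tableau formula, and it yields only an inequality. The paper's computation is self-contained and gives the explicit expansion \eqref{posexspq}. That expansion is reused in the induction step to identify the remainder $R_{p,q}(x_I,x_J)$ as exactly the terms of \eqref{posexspq} with $T\subseteq J$; nonnegativity of $s_{p,q}$ alone would not give this.

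Your fallback injection, once made precise, reduces within each class with fixed $S\cap T$ and fixed symmetric difference of size $m$ to $\binom{m}{k-1}\le\binom{m}{k}$ for $k\le(m+1)/2$. That is the paper's alternating binomial sum, grouped into nonnegative consecutive differences instead of summed in closed form.
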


\begin{proof}
Assume without loss of generality that \(p\le q\). Then
\beq\label{defscalare}
s_{p,q}(x)
=
\sum_{a=0}^{p}
(-1)^{p-a}e_a(x)e_{p+q+1-a}(x).
\eeq
We expand this signed sum combinatorially. A monomial in
\((-1)^{p-a}e_a(x)e_{p+q+1-a}(x)\) is obtained by choosing two subsets
\[
A,B\subset\{1,\dots,n\},
\qquad
|A|=a,\quad |B|=p+q+1-a.
\]
Its sign is \((-1)^{p-a}\), and its absolute value is
$
x_Ax_B,
$
where for any subset $J$ of $\{1,\dots,n\}$, $x_J:=\prod_{j\in J} x_j$.
Of course, there may be many 
monomials in the sum (\ref{defscalare}) having the same absolute value but possibly different sign.
To parametrize all such monomials, consider the disjoint subsets
$$
L:= A\cap B,\qquad  \qquad  T:=(A\setminus L)\cup (B\setminus L).
$$
Then
$$x_Ax_B=x_L^2x_T$$
and $x_L^2x_T$ uniquely determines the disjoint sets $L$ and $T$.
Hence, to parametrize all the monomials in the sum (\ref{defscalare}) having the same absolute value $x_Ax_B$, we need to parametrize all
$A$ and $B$ corresponding to the same $L,T$.\
Fix two disjoint subsets \(L,T\subset\{1,\dots,n\}\), and consider the monomial
\[
x_L^2x_T.
\]
Let
$d:=|L|$ and observe that $d\leq p$ and $d\leq q$.
Since
$
|A|+|B|=p+q+1,
$
we must have
\[
|T|=p+q+1-2d.
\]
For a fixed pair \((L,T)\), the possible choices of \(A\) and \(B\) are obtained by
choosing a subset \(D\subset T\), and setting
\[
A=L\cup D,
\qquad
B=L\cup(T\setminus D).
\]
The condition \(|A|\le p\) gives
\[
|D|\le p-d.
\]
Therefore the total coefficient of \(x_L^2x_T\) in \(s_{p,q}(x)\) is
\[
\sum_{h=0}^{p-d}
(-1)^{p-d-h}
\binom{|T|}{h}=\sum_{h=0}^{p-d}
(-1)^{p-d-h}
\binom{p+q+1-2d}{h}
=
\binom{p+q-2d}{p-d}\geq 0
\]
where we have used 
the following elementary alternating-binomial identity holding for $0\leq m<N$:
\[
\sum_{h=0}^m(-1)^{m-h}\binom{N}{h}
=
\binom{N-1}{m}.
\]

Hence
$
s_{p,q}(x)$ is a sum of positive terms of the type $x_L^2x_T$ weighted by nonnegative coefficients and therefore is manifestly nonnegative.
Indeed, we can explicitly write $
s_{p,q}(x)$ as
\beq\label{posexspq}
s_{p,q}(x)
=
\sum_{d=0}^{\mu}
\binom{p+q-2d}{\mu-d}
\sum_{\substack{L,T\subset\{1,\dots,n\}\\
L\cap T=\varnothing\\
|L|=d,\ |T|=p+q+1-2d}}
x_L^2x_T,
\eeq
where the inner sum is zero if the cardinality conditions cannot be satisfied.

\end{proof}

\subsection{The augmented matrix}

For \(0\le r\le n-1\), let
$
u_r$ and $U_r:=\diag(u_r)$ be defined by (\ref{defureUr}) and, for \(0\le p,q\le n-1\), let $s_{p,q}(x)$ be given by (\ref{defofs}) and define
\[
v_{p,q}:=u_p\circ u_q,
\]
where \(\circ\) denotes entry-wise Hadamard product.

Consider the block matrix
\beq\label{defofcalH}
\mathcal H_n^{(p,q)}(x)
:=
\begin{bmatrix}
s_{p,q}(x) & v_{p,q}^\top\\
v_{p,q} & A_n^{(p,q)}(x)
\end{bmatrix}.
\eeq

We will show that $\mathcal H_n^{(p,q)}(x)\succeq0$ for all $0\le p,q\le n-1$ which clearly implies $A_n^{(p,q)}(x)\succeq0$ and thus our main result.
To this aim we first need to show that $\mathcal H_n^{(p,q)}(x)$ is  singular for all $0\le p,q\le n-1$.

\begin{lemma}\label{detcalH=0}
Let \(x_1,\dots,x_n>0\) be pairwise distinct. Then
\[
\det\mathcal H_n^{(p,q)}(x)=0,\qquad \forall \ 0\le p,q\le n-1.
\]
\end{lemma}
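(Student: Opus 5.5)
The plan is to exhibit an explicit nonzero vector in the kernel of $\mathcal H_n^{(p,q)}(x)$, built from the vector $w=Xz$ of Proposition \ref{prop:cauchy-kernel-identities}. The first observation is that
\[
A_n^{(p,q)}(x)=U_pKU_q+U_qKU_p ,
\]
which follows by comparing entries: $[U_pKU_q]_{ij}=e_p^{(i)}e_q^{(j)}/(x_i+x_j)$. The key point is that (\ref{terzofatto}) holds \emph{simultaneously for all} $r$ with the same $w$. Hence
\[
A_n^{(p,q)}(x)w=U_p(\beta_q u_q)+U_q(\beta_p u_p)=(\beta_p+\beta_q)\,v_{p,q}.
\]

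Set $t:=-(\beta_p+\beta_q)$ and consider the vector $\begin{bmatrix} t\\ w\end{bmatrix}$. Its lower block gives $tv_{p,q}+(\beta_p+\beta_q)v_{p,q}=0$. The first component is $t\,s_{p,q}+v_{p,q}^\top w$, so everything reduces to the scalar identity
\[
v_{p,q}^\top w=(\beta_p+\beta_q)\,s_{p,q}(x).
\]
The vector is nonzero because $w=Xz\neq0$: indeed $z=\frac12K^{-1}h$ and $h\ne0$. Note that $\beta_p+\beta_q=\frac{(-1)^{n+1}}{2}\bigl((-1)^p+(-1)^q\bigr)$. This vanishes when $p,q$ have different parity, in which case we need $v_{p,q}^\top w=0$. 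When $p,q$ have the same parity it equals $(-1)^{n+p+1}$.

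To prove the scalar identity I will use generating functions in two variables $\sigma,\tau$, as in the proof of Proposition \ref{prop:cauchy-kernel-identities}. We have
\[
\sum_{p,q=0}^{n-1} v_{p,q}^\top w\,\sigma^p\tau^q=\sum_{i=1}^n x_iz_i\,U_i(\sigma)U_i(\tau)=E(\sigma)E(\tau)\sum_{i=1}^n\frac{x_iz_i}{(1+x_i\sigma)(1+x_i\tau)}.
\]
Next apply the partial fraction decomposition
\[
\frac{x_i}{(1+x_i\sigma)(1+x_i\tau)}=\frac1{\sigma-\tau}\Bigl[\frac1{1+x_i\tau}-\frac1{1+x_i\sigma}\Bigr],
\]
together with (\ref{eqforsumoffra}), i.e. $\sum_i z_i/(1+x_i\tau)=\frac{E(\tau)-(-1)^nE(-\tau)}{2E(\tau)}$. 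The generating function then collapses to
\[
\frac{(-1)^n\bigl[E(\tau)E(-\sigma)-E(\sigma)E(-\tau)\bigr]}{2(\sigma-\tau)}.
\]

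The remaining and main obstacle is extracting the coefficient of $\sigma^p\tau^q$ from this expression and matching it with $(\beta_p+\beta_q)s_{p,q}$. The numerator is $\sum_{a,b}e_ae_b\bigl((-1)^a-(-1)^b\bigr)\sigma^a\tau^b$, up to sign and ordering. It is antisymmetric, so it is divisible by $\sigma-\tau$. The monomial $\sigma^a\tau^b$ with $a>b$ divided by $\sigma-\tau$ contributes terms $\sigma^{p}\tau^{q}$ with $p+q=a+b-1$. After dividing, the coefficient of $\sigma^p\tau^q$ becomes an alternating sum $\sum_a \pm e_ae_{p+q+1-a}$. Pairing terms and using the parity factor $(-1)^a-(-1)^b$, I expect this to reduce to $(\beta_p+\beta_q)\sum_{a\le\mu}(-1)^{\mu-a}e_ae_{p+q+1-a}$, matching (\ref{defofs}). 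This requires careful bookkeeping of signs and of the range $a\le\min\{p,q\}$, and in particular checking that for mixed parity everything cancels. If the direct extraction becomes messy, I would instead verify it by writing $\frac{\sigma^a\tau^b-\sigma^b\tau^a}{\sigma-\tau}=\sum_{c=b}^{a-1}\sigma^c\tau^{a+b-1-c}$ and collecting terms.
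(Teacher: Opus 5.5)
Your proposal follows the paper's proof essentially step for step: the factorization $A_n^{(p,q)}=U_pKU_q+U_qKU_p$, the use of (\ref{terzofatto}) to get $A_n^{(p,q)}w=\gamma_{p,q}v_{p,q}$, the kernel vector $\bigl[\begin{smallmatrix}-\gamma_{p,q}\\ w\end{smallmatrix}\bigr]$ with $w\neq0$, and the same bivariate generating function collapsing to $\frac{(-1)^n}{2(\sigma-\tau)}\bigl[E(\tau)E(-\sigma)-E(\sigma)E(-\tau)\bigr]$. The coefficient extraction you defer works exactly as you predict, and is what the paper carries out: writing $b<a$ for each pair, $\sigma^p\tau^q$ occurs iff $a=p+q+1-b$ and $b\le\min\{p,q\}$. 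The factor $(-1)^a-(-1)^b$ is then $0$ when $p+q$ is odd and $-2(-1)^b$ when $p+q$ is even, which gives $(-1)^{n+\mu+1}s_{p,q}=\gamma_{p,q}s_{p,q}$.
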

\begin{proof}
First of all notice that $A_n^{(p,q)}(x)$ can be rewritten as 
\[
A_n^{(p,q)}(x)=U_pKU_q+U_qKU_p,
\]
where $K$ is the Cauchy matrix defined in (\ref{defofK}) and 
the diagonal matrices $U_p$ and $U_q$ are defined in (\ref{defureUr}).
Consider the vector $w:=Xz= X\frac12K^{-1}h=X \frac12K^{-1}X^{-1}\1$ and observe that $w\neq 0.$ By taking (\ref{terzofatto}) into account, we get
\beq\label{Aw=gammav}
A_n^{(p,q)}(x)w
=U_pKU_qw+U_qKU_pw=U_p\beta_q u_q+U_q\beta_p u_p=
(\beta_p+\beta_q)v_{p,q}=\gamma_{p,q}v_{p,q}
\eeq
where we have defined 
\[
\gamma_{p,q}:=\beta_p+\beta_q
=
\frac{(-1)^{n+p+1}+(-1)^{n+q+1}}2.
\]

We also need the scalar identity
\beq\label{scalaridentity}
v_{p,q}^\top w=\gamma_{p,q}s_{p,q}(x).
\eeq
To prove it, we use 
$
E(\tau)$ and
$
U_i(\tau)$ defined in (\ref{defdiE}) and (\ref{defdiUi}),
respectively.
Indeed, we use the following bivariate generating function and we get 
\[
\begin{aligned}
\sum_{p,q=0}^{n-1}v_{p,q}^\top w\,\tau^p\eta^q
&=
\sum_{p,q=0}^{n-1} \sum_{i=1}^n e_p^{(i)}e_q^{(i)} x_iz_i\,\tau^p\eta^q\\
&=
\sum_{i=1}^n x_i z_i U_i(\tau)U_i(\eta)\\
&=
\sum_{i=1}^n x_i z_i \frac{E(\tau)}{1+x_i\tau}\frac{E(\eta)}{1+x_i\eta}\\
&=
E(\tau) E(\eta)
\sum_{i=1}^n z_i \frac1{\tau-\eta}
\left(
\frac1{1+x_i\eta}-\frac1{1+x_i\tau}
\right)\\
&=
\frac{E(\tau) E(\eta)}{\tau-\eta}
\sum_{i=1}^n 
\left(
\frac{z_i }{1+x_i\eta}-\frac{z_i }{1+x_i\tau}
\right)\\
&= \frac{E(\tau) E(\eta)}{2(\tau-\eta)}\left(\frac{E(\eta)-(-1)^nE(-\eta)}{E(\eta)}-\frac{E(\tau)-(-1)^nE(-\tau)}{E(\tau)}\right),
\end{aligned}
\]
where to obtain the last equality we have used twice (for $\eta$ and for $\tau$) the identity (\ref{eqforsumoffra}).
Developing the products we get
\beq\label{bivariateid}
\sum_{p,q=0}^{n-1}v_{p,q}^\top w\,\tau^p\eta^q
=
\frac{(-1)^n}{2(\tau-\eta)}
\left(
E(\eta)E(-\tau)-E(\tau)E(-\eta)
\right).
\eeq

Now, we expand the right-hand side of (\ref{bivariateid})
to compare the coefficient of \(\tau^p\eta^q\) with the corresponding coefficient of the left-hand side, i.e.  \(v_{p,q}^{\top}w\).
We assume that
\(p\le q\).  The case \(q<p\) follows by symmetry.
From (\ref{defdiE}) it follows that
$
E(\eta)E(-\tau)
=
\sum_{a=0}^n\sum_{b=0}^n
e_a(x)e_b(x)(-1)^b \eta^a\tau^b
$
and
$
E(\tau)E(-\eta)
=
\sum_{a=0}^n\sum_{b=0}^n
e_a(x)e_b(x)(-1)^b \tau^a\eta^b.
$
Hence,
\[
E(\eta)E(-\tau)-E(\tau)E(-\eta)
=
\sum_{a,b=0}^n
e_a(x)e_b(x)(-1)^b
\left(\eta^a\tau^b-\tau^a\eta^b\right).
\]

%Now group together the terms indexed by \((a,b)\) and \((b,a)\).  
The
terms with \(a=b\) vanish. 
Moreover,  if \(a_0<b_0 \), the combined contribution of the  
terms indexed by \((a,b)=(a_0,b_0)\) and by \((a,b)=(b_0,a_0)\)
is
\[
e_{a_0}(x)e_{b_0}(x)
\left((-1)^{b_0}-(-1)^{a_0}\right)
\left(\tau^{b_0}\eta^{a_0}-\tau^{a_0}\eta^{b_0}\right).
\]
Hence
\[
E(\eta)E(-\tau)-E(\tau)E(-\eta)
=
\sum_{0\le a<b\le n}
e_a(x)e_b(x)
\left((-1)^b-(-1)^a\right)
\left(\tau^b\eta^a-\tau^a\eta^b\right).
\]

We now divide by \(\tau-\eta\).  For \(a<b\),
\[
\frac{\tau^b\eta^a-\tau^a\eta^b}{\tau-\eta}
=
\tau^a\eta^a
\frac{\tau^{b-a}-\eta^{b-a}}{\tau-\eta}
=
\tau^a\eta^a\sum_{m=0}^{b-a-1}
\tau^{b-a-1-m}\eta^m
=\sum_{m=0}^{b-a-1}
\tau^{b-1-m}\eta^{a+m}.
\]
Thus
\[
\frac{E(\eta)E(-\tau)-E(\tau)E(-\eta)}{\tau -\eta}
=
\sum_{0\le a<b\le n}
e_a(x)e_b(x)
\left((-1)^b-(-1)^a\right)
\sum_{m=0}^{b-a-1}
\tau^{b-1-m}\eta^{a+m}.
\]

Thus the monomial \(\tau^p\eta^q\) appears in the contribution of the
pair \((a,b)\) precisely when there exists an integer \(m\in\{0,\dots, b-a-1\}\) such that
$
p=b-1-m,
$ and $
q=a+m.
$
Adding the two equations gives
$
p+q=a+b-1.
$
Equivalently,
$
b=p+q+1-a.
$
Since we are assuming \(p\le q\), \(0\leq a<b\), and $
0\le m\le b-a-1$, the possible values of \(a\) are exactly
\beq\label{valoripossdia}
a=0,1,\dots,p.
\eeq
Indeed, $q=a+m\leq a+b-a-1=b-1$ so that $a+b-1=p+q\leq p+b-1$ and hence
$a\leq p$.
On the other hand, if $a$ satisfies (\ref{valoripossdia})
and we set
$
b=p+q+1-a,
$
then \(a<b\), and the corresponding value
$
m=q-a
$
satisfies
$
0\le m\le b-a-1.
$

Therefore the coefficient of \(\tau^p\eta^q\) is
\[
v_{p,q}^{\top}w
=
\frac{(-1)^n}{2}
\sum_{a=0}^{p}
e_a(x)e_{p+q+1-a}(x)
\left(
(-1)^{p+q+1-a}-(-1)^a
\right).
\]

We now distinguish two parity cases.
First suppose that \(p+q\) is odd.  Then \(p+q+1\) is even.  Hence
\(p+q+1-a\) has the same parity as \(a\), and therefore
$
(-1)^{p+q+1-a}=(-1)^a.
$
Thus every summand in the preceding formula vanishes, and so
when \(p+q\) is odd
$
v_{p,q}^{\top}w=0.
$
On the other hand, when \(p+q\) is odd
$
\gamma_{p,q}
=
\frac{(-1)^{n+p+1}+(-1)^{n+q+1}}{2}=0.
$
Consequently, when \(p+q\) is odd
$
v_{p,q}^{\top}w
=
0
=
\gamma_{p,q}s_{p,q}(x)
$ and hence the desired scalar identity (\ref{scalaridentity})
holds.

Now suppose that \(p+q\) is even.  Then \(p+q+1\) is odd.  Hence
\(p+q+1-a\) has parity opposite to that of \(a\), and therefore
\[
(-1)^{p+q+1-a}
=
-(-1)^a.
\]
It follows that
\[
(-1)^{p+q+1-a}-(-1)^a
=
-2(-1)^a.
\]
Substituting this into the coefficient formula gives
\[
\begin{aligned}
v_{p,q}^{\top}w
&=
\frac{(-1)^n}{2}
\sum_{a=0}^{p}
e_a(x)e_{p+q+1-a}(x)
\left(-2(-1)^a\right) \\[1mm]
&=
(-1)^{n+1}
\sum_{a=0}^{p}
(-1)^a e_a(x)e_{p+q+1-a}(x)\\
&=(-1)^{n+p+1}
\sum_{a=0}^{p}
(-1)^{p-a}e_a(x)e_{p+q+1-a}(x).
\end{aligned}
\]

Since \(p+q\) is even, the integers \(p\) and \(q\) have the same parity.
Thus
$
(-1)^{n+p+1}
=
(-1)^{n+q+1},
$
and consequently
\[
\gamma_{p,q}
=
\frac{(-1)^{n+p+1}+(-1)^{n+q+1}}{2}
=
(-1)^{n+p+1}.
\]
Moreover, since \(p\le q\), the definition of \(s_{p,q}\) gives
\[
s_{p,q}(x)
=
\sum_{a=0}^{p}
(-1)^{p-a}e_a(x)e_{p+q+1-a}(x).
\]
Therefore
\[
v_{p,q}^{\top}w
=
\gamma_{p,q}s_{p,q}(x).
\]
This concludes the proof of the scalar identity (\ref{scalaridentity}) for \(p\le q\).  Since the definitions of
\(v_{p,q}\), \(A_n^{(p,q)}\), \(\gamma_{p,q}\), and \(s_{p,q}\) are symmetric
in \(p\) and \(q\), the same identity follows for \(q<p\).

Combining (\ref{Aw=gammav}) and (\ref{scalaridentity}) we immediately get
\beq\label{kerofcalH}
\mathcal H_n^{(p,q)}(x)
\begin{bmatrix}
-\gamma_{p,q}\\
w
\end{bmatrix}
=0
\eeq
and, since $w\neq 0$, this implies
\beq\label{detH=0}
\det\mathcal H_n^{(p,q)}(x)=0.
\eeq
\end{proof}

We are now ready for our last result
\begin{proposition}
Let \(x_1,\dots,x_n>0\) be pairwise distinct and  
$\mathcal H_n^{(p,q)}(x)$ be defined by (\ref{defofcalH}).
Then
\beq\label{result-extended}
\mathcal H_n^{(p,q)}(x)\succeq 0,\qquad \forall \ 0\le p,q\le n-1.
\eeq
\end{proposition}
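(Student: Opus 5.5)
We argue by induction on $n$ and use the principal-minor criterion: a real symmetric matrix is positive semidefinite if and only if every principal minor is nonnegative. The key observation is that every proper principal submatrix of $\mathcal H_n^{(p,q)}(x)$ either contains the first row/column or does not. If it contains the first row/column, it is obtained by deleting some indices $i\in\{1,\dots,n\}$. We want to relate it to an augmented matrix of the same type built on fewer variables, up to a positive diagonal congruence plus a positive semidefinite correction. If it does not contain the first row/column, it is a principal submatrix of $A_n^{(p,q)}(x)$. Since $\det\mathcal H_n^{(p,q)}(x)=0$ by Lemma \ref{detcalH=0}, the full determinant is nonnegative, so it suffices to control the proper principal minors. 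Continuity also allows us to treat only generic $x$.

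The plan is as follows. The base case $n=1$ is immediate: $\mathcal H_1$ is $2\times 2$, with $s_{0,0}=e_1=x_1\ge0$, $v=1$ and $A_1^{(0,0)}=2/(2x_1)=1/x_1$. The determinant is zero, and the diagonal entries are positive.

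For the inductive step, fix an index set and remove one variable, say $x_n$. We use $e_r^{(i)}(x)=e_r^{(i)}(x')+x_ne_{r-1}^{(i)}(x')$ for $i<n$, where $x'=(x_1,\dots,x_{n-1})$. This shows that the principal submatrix of $\mathcal H_n^{(p,q)}(x)$ obtained by deleting row/column $n$ decomposes as a sum of four matrices. These have the same bilinear structure $U_aKU_b+U_bKU_a$, with the corresponding $s$ and $v$ entries, for the pairs $(p,q),(p-1,q),(p,q-1),(p-1,q-1)$ in $n-1$ variables. The cross terms combine as $x_n\big(\mathcal H^{(p-1,q)}+\mathcal H^{(p,q-1)}\big)$, and the remaining terms are $\mathcal H^{(p,q)}_{n-1}(x')+x_n^2\mathcal H^{(p-1,q-1)}_{n-1}(x')$.

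This splitting is the crux. One must check that the $s_{p,q}$ entry splits compatibly, i.e. that $s_{p,q}(x)$ equals the corresponding combination of $s$'s in $x'$ plus a nonnegative remainder. The positive expansion (\ref{posexspq}) should make that remainder manifest. The mixed pair should then be handled by writing $\mathcal H^{(p-1,q)}+\mathcal H^{(p,q-1)}$ as a Schur-complement-type combination, or by noting that $\mathcal H^{(a,b)}$ is linear in the generating variables. In that case the whole sum is the $\tau^p\eta^q$ coefficient of a quadratic form in $(1+x_n\tau)(1+x_n\eta)$, which is PSD-preserving. By the induction hypothesis each piece is PSD, so the submatrix is PSD and all its principal minors are nonnegative.

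Repeating the argument for every deleted index shows that all principal minors of order at most $n$ are nonnegative. Together with $\det\mathcal H_n^{(p,q)}=0$, this yields (\ref{result-extended}). We expect the main obstacle to be precisely the compatible decomposition of the corner entry $s_{p,q}$ and the mixed terms. If the direct decomposition fails, we will fall back on a second route. That route first shows that $A_n^{(p,q)}$ restricted to the complement of $w$ is PSD, and then uses the kernel vector $(-\gamma_{p,q},w)$ from (\ref{kerofcalH}) to conclude. The case $\gamma_{p,q}=0$ would be treated separately via the positivity of $s_{p,q}$.
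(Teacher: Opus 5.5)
Your handling of principal submatrices that contain the index $0$ is essentially the paper's. The paper deletes an arbitrary set $J$ of lower indices; your choice $J=\{n\}$ suffices, because positive semidefiniteness passes to principal submatrices. However, the closing claim that repeating this for every deleted index makes all principal minors of order at most $n$ nonnegative is false. Knowing that $\mathcal H_n^{(p,q)}(x)$ with lower row/column $i$ removed is PSD for each $i\in\{1,\dots,n\}$ controls exactly the principal minors indexed by sets that omit some $i\ge1$. The index set $\{1,\dots,n\}$, obtained by deleting $0$, omits none of them, so $\det A_n^{(p,q)}(x)$ is not a minor of any of your pieces. This is not a boundary case. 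It is the determinant of the very matrix whose positivity is the main theorem, and no splitting into smaller augmented matrices reaches it. Without it the principal-minor criterion cannot be invoked. Your fallback (positivity of $A_n^{(p,q)}$ ``restricted to the complement of $w$'') is too vague to fill this hole and is aimed elsewhere. The real obstacle is this minor, not the corner entry.

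The repair uses the kernel vector $k:=(-\gamma_{p,q},w)$ from (\ref{kerofcalH}) together with $w\ne0$. Pick a lower index $j$ with $w_j\ne0$. For $y\in\R^{n+1}$ put $y':=y-(y_j/w_j)k$, whose $j$-th lower entry vanishes. Since $\mathcal H_n^{(p,q)}(x)$ is symmetric and annihilates $k$, we get $y^\top\mathcal H_n^{(p,q)}(x)y=(y')^\top\mathcal H_n^{(p,q)}(x)y'\ge0$, by positivity of the submatrix with row/column $j$ deleted. This gives (\ref{result-extended}) directly, with no principal-minor criterion at all. The paper instead argues via the adjugate:
\begin{itemize}
\item if $\operatorname{rank}\mathcal H_n^{(p,q)}(x)\le n-1$, then $\det A_n^{(p,q)}(x)=0$;
\item otherwise $\operatorname{adj}\mathcal H_n^{(p,q)}(x)=c\,kk^\top$, the cofactor $cw_j^2\ge0$ forces $c\ge0$, and $\det A_n^{(p,q)}(x)=c\gamma_{p,q}^2\ge0$.
\end{itemize}

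There are two smaller points. First, the corner compatibility is left at ``should''. It needs the Vandermonde comparison of coefficients in (\ref{posexspq}), which for $J=\{n\}$ yields the remainder $\delta_{pq}\,x_n\sum_{L\subseteq\{1,\dots,n-1\},\,|L|=p}x_L^2\ge0$. Second, the mixed terms need no Schur complement: $\mathcal H^{(p-1,q)}_{n-1}(x')$ and $\mathcal H^{(p,q-1)}_{n-1}(x')$ are each PSD by induction, and $x_n>0$. For this you do need a convention you never state: a pair with an index exceeding $n-2$ gives $\mathrm{diag}(s,0)$ with $s\ge0$, while a negative index gives no term.
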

\begin{proof}
We prove the proposition by induction on \(n\).
For \(n=1\), the only possible pair is \(p=q=0\). We have
\[
A_1^{(0,0)}(x)=\left[\frac1{x_1}\right],
\qquad
s_{0,0}(x)=x_1,
\qquad
v_{0,0}=[1],
\]
and hence
\[
\mathcal H_1^{(0,0)}(x)
=
\begin{bmatrix}
x_1&1\\
1&1/x_1
\end{bmatrix}
\succeq0.
\]

Assume the proposition has been proved in all dimensions strictly smaller than
\(n\). Fix \(0\le p,q\le n-1\). 
In the induction argument we use the following convention. For a vector \(y\) of length \(m\), the quantities \(u_r(y)\), \(v_{r,s}(y)\), and \(A_m^{(r,s)}(y)\) are defined using \(e_r^{(i)}(y)=0\) whenever \(r<0\) or \(r>m-1\). The scalar \(s_{r,s}(y)\) is defined by the same formula as above, with \(e_j(y)=0\) for \(j>m\). Thus, if either \(r>m-1\) or \(s>m-1\), then \(v_{r,s}(y)=0\) and \(A_m^{(r,s)}(y)=0\), while \(s_{r,s}(y)\ge0\) by the scalar positivity lemma. 

We  prove that (\ref{result-extended}) holds
by resorting to 
the principal-minor criterion. Since, by Lemma \ref{detcalH=0}
\[
\det\mathcal H_n^{(p,q)}(x)=0,
\]
it remains to show that every proper principal minor is nonnegative.
To this end, we index the  first row and column of \(\mathcal H_n^{(p,q)}\) by
\(0\).

First consider the principal submatrices of  \(\mathcal H_n^{(p,q)}\) containing the index \(0\).
Consider one such a submatrix indexed by
\[
\{0\}\cup I,
\qquad
I\subsetneq\{1,\dots,n\}.
\]
If \(I=\varnothing\), the corresponding principal submatrix is the \(1\times1\) matrix
$s_{p,q}(x)$, which is positive semidefinite by Lemma 3.1. Hence we may assume \(I\ne\varnothing\).

Let
\[
J:=\{1,\dots,n\}\setminus I.
\]
Using the decompositions
\[
e_p^{(i)}(x)
=
\sum_{a=0}^{p}e_a(x_J)e_{p-a}^{(i)}(x_I),
\qquad
e_q^{(i)}(x)
=
\sum_{b=0}^{q}e_b(x_J)e_{q-b}^{(i)}(x_I),
\]
we get
\[
v_{p,q}(x)[I]
=
\sum_{a=0}^{p}
\sum_{b=0}^{q}
e_a(x_J)e_b(x_J)
v_{p-a,q-b}(x_I),
\]
and
\[
A_n^{(p,q)}(x)[I]
=
\sum_{a=0}^{p}
\sum_{b=0}^{q}
e_a(x_J)e_b(x_J)
A_{|I|}^{(p-a,q-b)}(x_I),
\]
where $v_{p,q}(x)[I]$ 
and
$
A_n^{(p,q)}(x)[I]$ denote, respectively, the restriction of $v_{p,q}(x)$ 
and
$
A_n^{(p,q)}(x)$ to the set if indexes $I$.
Consequently,
\beq\label{eq:block-decomposition}
\mathcal H_n^{(p,q)}(x)[\{0\}\cup I]
=
\sum_{a=0}^{p}
\sum_{b=0}^{q}
e_a(x_J)e_b(x_J)
\mathcal H_{|I|}^{(p-a,q-b)}(x_I)
+
\begin{bmatrix}
R_{p,q}(x_I,x_J)&0\\
0&0
\end{bmatrix},
\eeq
where
\begin{equation}\label{eq:R-definition}
  R_{p,q}(x_I,x_J)
  :=
  s_{p,q}(x)
  -
  \sum_{a=0}^{p}\sum_{b=0}^{q}
  e_a(x_J)e_b(x_J)s_{p-a,q-b}(x_I).
\end{equation}
is the difference between \(s_{p,q}(x)\) and the corresponding sum of the
top-left entries.
Let \(\mu:=\min\{p,q\}\). We now show that
\begin{equation}\label{eq:R-explicit}
  R_{p,q}(x_I,x_J)
  =
  \sum_{d=0}^{\mu}
  \binom{p+q-2d}{\mu-d}
  \sum_{\substack{
    L\subseteq\{1,\dots,n\},\ T\subseteq J\\
    L\cap T=\varnothing\\
    |L|=d,\ |T|=p+q+1-2d
  }}
  x_L^2x_T.
\end{equation}
Equivalently, $R_{p,q}$ consists precisely of those monomials, in the positive
expansion \eqref{posexspq} of $s_{p,q}(x)$, for which every variable appearing to the first
power belongs to $J$. This clearly implies $
R_{p,q}(x_I,x_J)\geq 0.
$
From now on, we  assume \(p\le q\) so that $\mu=p$. By symmetry, this assumption can be done without loss of generality.
To prove (\ref{eq:R-explicit}), 
consider a monomial $x_L^2x_T$ in the positive expansion
\eqref{posexspq} of $s_{p,q}(x)$, so that
\[
  L,T\subseteq\{1,\dots,n\},
  \qquad
  L\cap T=\varnothing,
  \qquad
  |L|=d,
  \qquad
  |T|=p+q+1-2d.
\]
Its coefficient in $s_{p,q}(x)$ is
\begin{equation}\label{eq:full-coefficient}
  \binom{p+q-2d}{p-d}.
\end{equation}
We compare this with its coefficient in the  term
\[
  \sum_{a=0}^{p}\sum_{b=0}^{q}
  e_a(x_J)e_b(x_J)s_{p-a,q-b}(x_I).
\]

First suppose that $T\cap I\ne\varnothing$. Let
\[
  t_I:=|T\cap I|,
  \qquad
  t_J:=|T\cap J|,
  \qquad
  d_J:=|L\cap J|.
\]
In a contribution from the term indexed by $(a,b)$, the two factors
$e_a(x_J)$ and $e_b(x_J)$ must contain every variable in $L\cap J$, and they
must split the variables in $T\cap J$. If $h$ variables of $T\cap J$ are placed
in $e_a(x_J)$, then
\[
  a=d_J+h,
  \qquad
  b=d_J+t_J-h,
  \qquad
  0\le h\le t_J,
\]
and there are $\binom{t_J}{h}$ such choices.

For this fixed $h$, the remaining part of the monomial must be supplied by
$s_{p-a,q-b}(x_I)$. Its coefficient is
\[
  \binom{t_I-1}{p-d-h}.
\]
Indeed, the upper index is $t_I-1$, while the two possible lower indices are
$p-d-h$ and $t_I-1-(p-d-h)$; the two give the same binomial coefficient.
Therefore the coefficient of $x_L^2x_T$ in the convolution term is
\beq\label{sumcanc}
  \sum_{h=0}^{t_J}
  \binom{t_J}{h}
  \binom{t_I-1}{p-d-h}=\binom{t_I+t_J-1}{p-d}=
  \binom{p+q-2d}{p-d}
\eeq
where we used the Vandermonde's identity and the fact that
$t_I+t_J=|T|=p+q+1-2d$.
The right-hand side of (\ref{sumcanc}) is exactly the coefficient \eqref{eq:full-coefficient}. Hence every monomial with $T\cap I\ne\varnothing$ cancels in the difference
\eqref{eq:R-definition}.

Now suppose that $T\subseteq J$. Then the convolution term cannot produce
$x_L^2x_T$. Indeed, each nonzero monomial in a positive expansion of some
$s_{r,s}(x_I)$ has at least one variable from $I$ appearing to the first power,
since its set of variables appearing to the first power has cardinality
$r+s+1-2\delta$ with $\delta\le \min\{r,s\}$.
Thus the monomial $x_L^2x_T$ survives in $R_{p,q}(x_I,x_J)$ with its full
coefficient \eqref{eq:full-coefficient}.
Consequently, the only surviving monomials are exactly those with $T\subseteq J$.
This proves \eqref{eq:R-explicit}. 
Since all the coefficients in
\eqref{eq:R-explicit} are binomial coefficients and all the monomials $x_L^2x_T$
are nonnegative for $x_i>0$, we conclude that
\[
  R_{p,q}(x_I,x_J)\ge0.
\]

Therefore, in \eqref{eq:block-decomposition}, all coefficients
$e_a(x_J)e_b(x_J)$ are nonnegative, the matrices
$\mathcal H_{|I|}^{(p-a,q-b)}(x_I)$ are positive semidefinite by the induction
hypothesis, and the scalar block
containing $R_{p,q}(x_I,x_J)$ is positive semidefinite. Hence
\[
  \mathcal H_n^{(p,q)}(x)[\{0\}\cup I]\succeq0.
\]

In conclusion, every proper principal minor containing the index \(0\) is nonnegative.

It remains to prove that every proper principal submatrix of
$\mathcal H_n^{(p,q)}(x)$
not containing the index \(0\) is positive semidefinite.
Such a submatrix is 
a
principal submatrix of \(A_n^{(p,q)}(x)\), indexed by some
\[
I\subseteq\{1,\dots,n\}.
\]
If \(I\subsetneq\{1,\dots,n\}\), put
\[
J:=\{1,\dots,n\}\setminus I.
\]
For \(i\in I\),
\[
e_p^{(i)}(x)
=
\sum_{a=0}^{p}e_a(x_J)e_{p-a}^{(i)}(x_I),
\]
and similarly
\[
e_q^{(i)}(x)
=
\sum_{b=0}^{q}e_b(x_J)e_{q-b}^{(i)}(x_I).
\]
Therefore
\[
A_n^{(p,q)}(x)[I]
=
\sum_{a=0}^{p}
\sum_{b=0}^{q}
e_a(x_J)e_b(x_J)
A_{|I|}^{(p-a,q-b)}(x_I),
\]
where terms with negative indices or indices larger than \(|I|-1\) are
interpreted as zero, and we use the symmetry
$
A_{|I|}^{(r,s)}=A_{|I|}^{(s,r)}.
$
By the induction hypothesis, every nonzero matrix on the right-hand side is
positive semidefinite. Since all coefficients are nonnegative, we get
\[
A_n^{(p,q)}(x)[I]\succeq0.
\]
Thus every such principal minor is nonnegative.

\newpage

It remains to treat the principal submatrix indexed by the full lower set
\(\{1,\dots,n\}\), namely \(A_n^{(p,q)}(x)\).

If \(\gamma_{p,q}=0\), then \(A_n^{(p,q)}(x)w=0\). Since \(w\ne0\), it follows that
\[
\det A_n^{(p,q)}(x)=0.
\]

Assume now that \(\gamma_{p,q}\ne0\). If
\[
\operatorname{rank}\mathcal H_n^{(p,q)}(x)\le n-1,
\]
then all \(n\times n\) minors of \(\mathcal H_n^{(p,q)}(x)\) vanish, and in particular
\[
\det A_n^{(p,q)}(x)=0.
\]
Otherwise \(\operatorname{rank}\mathcal H_n^{(p,q)}(x)=n\). Since
\[
\mathcal H_n^{(p,q)}(x)
\begin{bmatrix}
-\gamma_{p,q}\\
w
\end{bmatrix}
=0,
\]
the kernel is one-dimensional and the adjugate has the form
\[
\operatorname{adj}\mathcal H_n^{(p,q)}(x)
=
c
\begin{bmatrix}
-\gamma_{p,q}\\
w
\end{bmatrix}
\begin{bmatrix}
-\gamma_{p,q}\\
w
\end{bmatrix}^{\top}
\]
for some real number \(c\). Choose \(j\) such that \(w_j\ne0\). The principal cofactor obtained by deleting the \(j\)-th lower row and column is a proper principal minor containing the distinguished index \(0\), hence it is nonnegative by the previous part of the proof. Therefore \(c w_j^2\ge0\), so \(c\ge0\). The cofactor corresponding to the distinguished index \(0\) is \(\det A_n^{(p,q)}(x)\), and hence
\[
\det A_n^{(p,q)}(x)=c\gamma_{p,q}^2\ge0.
\]

\newpage

It remains
 to treat the submatrix indexed by the full
set \(I=\{1,\dots,n\}\), namely \(A_n^{(p,q)}(x)\) itself.

If \(\gamma_{p,q}=0\), then (\ref{kerofcalH}) yields
$
A_n^{(p,q)}(x)w=0.
$
Since \(w\ne0\), this gives
\beq\label{detA=0}
\det A_n^{(p,q)}(x)=0.
\eeq
Moreover, if 
\(\operatorname{rank}\mathcal H_n^{(p,q)}(x)\le n-1\), then every \(n\times n\)
minor of \(\mathcal H_n^{(p,q)}(x)\) is zero, so that
(\ref{detA=0}) holds.

We now consider the case 
of \(\gamma_{p,q}\ne0\) and
\(\operatorname{rank}\mathcal H_n^{(p,q)}(x)=n\).
In this case, 
the kernel of $H_n^{(p,q)}(x)$ is a $1$-dimensional space generated by the vector
\[
\begin{bmatrix}
-\gamma_{p,q}\\
w
\end{bmatrix}
\]
Hence  the adjugate of
\(\mathcal H_n^{(p,q)}(x)\) has rank one and is given by
\[
\operatorname{adj}\mathcal H_n^{(p,q)}(x)
=
c
\begin{bmatrix}
-\gamma_{p,q}\\
w
\end{bmatrix}
\begin{bmatrix}
-\gamma_{p,q}\\
w
\end{bmatrix}^{\top}
\]
for some real number \(c\).

Choose \(j\) such that \(w_j\ne0\). The principal minor obtained by deleting
the \(j\)-th row and column among the lower \(n\) rows and columns is a proper
principal minor containing the  index \(0\) and, as shown in the first part of the proof, such
principal minors are nonnegative. Therefore
$
c w_j^2\ge0,
$
so that
$
c\ge0.
$
The cofactor corresponding to the index \(0\) is exactly
\[
\det A_n^{(p,q)}(x).
\]
Hence
\[
\det A_n^{(p,q)}(x)
=
c\gamma_{p,q}^2
\ge0.
\]

We have shown that every proper principal minor of
\(\mathcal H_n^{(p,q)}(x)\) is nonnegative, and we already know that
(\ref{detH=0}) holds.
Therefore all principal minors of \(\mathcal H_n^{(p,q)}(x)\) are nonnegative.
By the principal-minor criterion, (\ref{result-extended}) holds. This completes the induction and the proof.
\end{proof}

\section{Conclusion}

We have proved that the Cauchy-like matrix
\[
C_{ij}=\frac{r_i(k,l)+r_j(k,l)}{\lambda_i+\lambda_j},
\qquad
r_i(k,l)=\frac{\sigma_{n-k}^{(i)}}{\sigma_{n-l}^{(i)}},
\]
is positive semidefinite for all positive distinct nodes
$\lambda_1,\ldots,\lambda_n$ and for every $1\le k,l\le n$.  The proof is based
on three ingredients: a diagonal-congruence reduction to the matrices
$A_n^{(p,q)}(x)$ in reciprocal variables; a Cauchy-kernel identity producing a
nonzero kernel vector for the augmented matrices
$\mathcal H_n^{(p,q)}(x)$; and an induction argument using principal minors.  A
notable feature of the proof is the explicit positive expansion of the scalar
$s_{p,q}$, together with the corresponding positive expansion of the remainder
$R_{p,q}(x_I,x_J)$ that appears when restricting to a proper principal
submatrix.  This coefficientwise description is what makes the induction close
for the full two-parameter family.

\bibliographystyle{plain}

\bibliography{Cauchy-like-matrices}

\end{document}